\newenvironment{myabstract}{\par\noindent
{\bf Abstract . } \small }
{\par\vskip8pt minus3pt\rm}
\newcounter{item}[section]
\newcounter{kirshr}
\newcounter{kirsha}
\newcounter{kirshb}
\newenvironment{enumarab}{\setcounter{kirshb}{1}
\begin{list}{(\arabic{kirshb})}{\usecounter{kirshb}} }{\end{list}}
\newtheorem{theorem}{Theorem}[section]
\newtheorem{corollary}[theorem]{Corollary}
\newenvironment{demo}[1]{\noindent{\bf #1.}\upshape\mdseries}
{\nopagebreak{\hfill\rule{2mm}{2mm}\nopagebreak}\par\normalfont}
\theoremstyle{definition}
\def\C{{\mathfrak{C}}}
\def\Nr{{\mathfrak{Nr}}}
\def\Sg{{\mathfrak{Sg}}}
\def\A{{\mathfrak{A}}}
\def\B{{\mathfrak{B}}}
\def\C{{\mathfrak{C}}}
\def\CA{{\bf CA}}
\def\Df{{\bf Df}}
\def\Lf{{\bf Lf}}
\def\RCA{{\bf RCA}}
\def\Rd{{\ Rd}}
\def\(R)RA{{\bf (R)RA}}
\def\c #1{{\cal #1}}
 \def\CA{{\sf CA}}
\def\B{{\sf B}}
\def\Nr{{\mathfrak{Nr}}}
\def\set#1{\{#1\} }
\def\Nr{{\mathfrak{Nr}}}
\def\A{{\mathfrak{A}}}
\def\B{{\mathfrak{B}}}
\def\C{{\mathfrak{C}}}
\def\A{{\mathfrak{A}}}
\def\B{{\mathfrak{B}}}
\def\C{{\mathfrak{C}}}
\def\Rd{{\mathfrak{Rd}}}
\def\CA{{\bf CA}}
\def\RCA{{\bf RCA}}
\def\c #1{{\cal #1}}
\def\Nr{{\mathfrak{Nr}}}
\def\CA{{\bf CA}}
\def\RCA{{\bf RCA}}
\def\c#1{{\mathcal #1}}
\def\set#1{ \{#1\}}
\def\Sg{{\mathfrak Sg}}
\def\Rl{{\mathfrak Rl}}
\title{For $2<n+1<m$, $Ur\Nr_n\CA_m$ is not finitely axiomatizable}
\author{Tarek Sayed Ahmed \\
Department of Mathematics, Faculty of Science,\\ 
Cairo University, Giza, Egypt.
  }
\begin{document}
\maketitle

\begin{myabstract} We show that for $2<n+1<m$, the class $\Nr_n\CA_m$ is psuedo elementary, whose
elementary that is not finitely axiomatizable.

\end{myabstract}

%\section{Introduction}

%\section{Characterizaing the class of neat reducts}

The class of neat reducts has been extensively studied by the author, 
Andr\'eka, N\'emeti, Hirsch, Hodkinson, Ferenzci and others. In this note we show that for $1<n<m$, the class 
$\Nr_n\CA_m $ is psuedo-elementary (it is known that it is not closed under ultraroots $Ur$ \cite{MLQ}), and that for
$2<n+1<m<\omega$, the class $Ur\Nr_nCA_m$ is not finitely axiomatizabe.
For our first result we use a defining theory in two sorts when both $n$ and $m$ are finite, three sorts 
when $n$ is finite and $m$ is infinite, and four sorts when both $m$and $n$ are infinite.
For our second result we use Monk-like algebras constructed 
by Robin Hirsch.

\begin{theorem}
Let $1<n<m$, then the class $\Nr_n\CA_m$ is pseudo-elementary, but is not elementary.
Furthermore, $EL\Nr_n\CA_m$is recursively enumerable, and for $n>2$, and $m\geq 2$, $EL\Nr_n\CA_m$ is not finitely axiomatizable.
\end{theorem}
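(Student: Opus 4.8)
The plan is to treat the four assertions separately, spending most effort on the last.

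\textbf{Pseudo-elementariness.} I would present $\Nr_n\CA_m$ as the class of reducts of the models of a first-order theory in an expanded many-sorted signature. For $m<\omega$ two sorts suffice: one sort for an algebra $\A$, one for an algebra $\B$, with function symbols for the Boolean and cylindric operations on each sort and an inclusion map $\A\to\B$. The axioms assert that $\B\in\CA_m$, that the image of $\A$ is a Boolean subalgebra closed under $\cyl{i},\diag{i}{j}$ for $i,j<n$, and that $\A$ consists of exactly those $b\in\B$ with $\cyl{i}b=b$ for all $i$ with $n\le i<m$; the last clause pins $\A$ down as $\Nr_n\B$. Since $\CA_m$ is a finitely based variety for finite $m$, this theory $T$ is finite (recursive in general), and its $\A$-sort reducts are precisely $\Nr_n\CA_m$. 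When $m$ is infinite a single sort can no longer name the operators $\cyl{i}$ uniformly, so I would adjoin a sort for the index set $m$ and quantify over indices, and a further sort for $n$ when $n$ too is infinite, giving the three and four sorts announced in the abstract.

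\textbf{Non-elementariness and recursive enumerability.} That $\Nr_n\CA_m$ is not closed under ultraroots $Ur$ is established in \cite{MLQ}; since every elementary class is closed under $Ur$, the class is not elementary. For the second clause note that $EL\Nr_n\CA_m$ is axiomatised by the set $\Sigma$ of all first-order $\A$-sort sentences valid in $\Nr_n\CA_m$. By the pseudo-elementary presentation, $\varphi\in\Sigma$ iff $T$ proves the relativisation of $\varphi$ to the $\A$-sort; as $T$ is recursive, G\"odel completeness makes this provability relation recursively enumerable, whence $\Sigma$, and so $EL\Nr_n\CA_m$, is recursively enumerable.

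\textbf{Non-finite-axiomatisability.} Here I would invoke Hirsch's Monk-like algebras: a sequence $(\A_i:i<\omega)$ of $n$-dimensional cylindric algebras, built from relational structures (graphs) of increasing girth or chromatic number, enjoying (i) $\A_i\notin EL\Nr_n\CA_m$ for every $i$, and (ii) $\prod_{i<\omega}\A_i/U\in\Nr_n\CA_m$ for some non-principal \uf\ $U$. Granting these, suppose $EL\Nr_n\CA_m$ were axiomatised by a single sentence $\sigma$ (the conjunction of finitely many axioms). By (ii) and $\Nr_n\CA_m\subseteq EL\Nr_n\CA_m$ we get $\prod_{i}\A_i/U\models\sigma$, so by \Los\ the set $\{i:\A_i\models\sigma\}$ lies in $U$ and is in particular nonempty; for such $i$ we have $\A_i\in EL\Nr_n\CA_m$, contradicting (i). Thus $EL\Nr_n\CA_m$ admits no finite axiomatisation in the range $n>2$, $n<m$.

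\textbf{The main obstacle.} Everything hinges on securing (i) and (ii) from one construction. For (ii) I would show that on the ultraproduct (equivalently, on the limit structure) \exists\ has a \ws\ in the relevant neat-embedding \hb\ game, which yields a \ca\ of dimension $m$ whose $n$-neat reduct is $\prod_i\A_i/U$; the point of the Monk-style design is that the combinatorial defects deliberately planted in the finite $\A_i$ are blurred away in the ultraproduct, so \exists\ can win there. For (i) I would exhibit, for each $i$, a first-order sentence recording a bounded failure of the $m$-dimensional neat-embedding pattern that holds in $\A_i$ yet is refuted throughout $\Nr_n\CA_m$. The delicate part is exactly that non-membership must be rendered \emph{elementary}, i.e.\ robust under elementary equivalence rather than merely true of $\A_i$; guaranteeing this — so that $\A_i$ falls outside the elementary closure and not just outside $\Nr_n\CA_m$ — is where the careful quantitative control in Hirsch's algebras is indispensable.
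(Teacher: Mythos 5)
Your skeleton --- the many-sorted pseudo-elementary presentation (two sorts for finite $m$, three or four when $m$ or $n$ is infinite), non-elementarity from failure of closure under $Ur$ as in \cite{MLQ}, recursive enumerability of the theory from the recursive defining theory, and the \Los\ ultraproduct argument reducing non-finite axiomatizability to finding algebras outside $EL\Nr_n\CA_m$ whose ultraproduct lies inside --- coincides with the paper's, and those parts are sound. The genuine gap is that you never produce the witnessing algebras: properties (i) and (ii) are \emph{postulated}, and your sketches for securing them do not amount to proofs. For (i), ``exhibit a first-order sentence that holds in $\A_i$ yet is refuted throughout $\Nr_n\CA_m$'' is not a method but a restatement of $\A_i\notin EL\Nr_n\CA_m$, and you concede the point by flagging elementary robustness as the unresolved delicacy. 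For (ii), the game/hyperbasis argument on the ultraproduct is likewise only announced.

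The paper closes both holes without bespoke sentences or games, by routing non-membership through \emph{varieties} that contain the class, so that robustness under elementary equivalence is automatic. For the case $m=\omega$ it takes Monk's algebras $\C_{n,i}$ of \cite{HMT1} (items 3.2.79 and 3.2.84): for each $m\in\omega$ one gets $\A_m=\Nr_n\B_m$ with $\B_m\in\CA_{n+m}$ whose diagonal-free reduct $\Rd_{df}\A_m$ is non-representable; since $\Nr_n\CA_{\omega}\subseteq\RCA_n$ and $\RCA_n$ is a variety, $\A_m\notin\RCA_n$ already yields $\A_m\notin EL\Nr_n\CA_{\omega}$ with no tailor-made sentence. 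Property (ii) is then obtained not from a game but from the fact that $\Nr_n$ commutes with ultraproducts: expanding each $\B_m$ to a $\CA_{\omega}$-type algebra $\C_m$, one has $\prod_{m}\A_m/F=\Nr_n(\prod_m\C_m/F)$, and $\prod_m\C_m/F\in\CA_{\omega}$ by \Los\ because each cylindric axiom mentions only finitely many indices and so holds in cofinitely many $\C_m$. Note also that your single uniform construction cannot serve all $m$: for finite $m$ the class $\Nr_n\CA_m$ contains non-representable algebras, so exclusion via $\RCA_n$ is unavailable, and the paper instead invokes the Hirsch--Sayed Ahmed algebras $\C(m,n,r)$, which lie in $\Nr_m\CA_n$ but outside the Henkin variety $S\Nr_m\CA_{n+1}$ (the exclusion mechanism now being that variety rather than $\RCA_n$), while their ultraproduct over $r$ falls back inside the relevant elementary closure. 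Your proposal is missing exactly this case distinction and the two concrete constructions that turn (i) and (ii) from hypotheses into theorems.
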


\begin{demo}{Proof} 

\begin{enumarab}
\item For $n<m <\omega$, the charactersation is easy. One defines the class $\Nr_n\CA_m$ in a two sorted language. 
The first sort for the $n$ dimensional cylindric algebra the second for the $m$  dimensinal cylindric algebra. The signature of the defining theory 
includes an injective  function $I$ from sort one to sort two and includes a sentence requiring 
that $I$ respects the operations and a sentence saying that  an element of the second sort 
say $y$ satisfies  $\bigvee_{n\leq i<m} c_iy=y$, iff there exists $x$ of sort one 
such that $y=I(x)$ so that $I$ is a bijection.
 
Assume that $n$ is still finite, we first show that for any infinite $\alpha$, $\Nr_n\CA_{\omega}=\Nr_n\CA_{\alpha}$. Let $\A\in \Nr_n\CA_{\omega}$, 
so that $\A=\Nr_n\B'$, $\B'\in \CA_{\omega}$. Let $\B=\Sg^{\B'}A$. Then $\B\in \Lf_{\omega}$, and $\A=\Nr_n\B$. 
But $\Lf_{\omega}=\Nr_{\omega}\Lf_{\alpha}$ and we are done.
To show that $\Nr_n\CA_{\omega}\subseteq \Nr_n\RCA_{\omega}$, let $\A\in \Nr_n\CA_{\omega}$, then by the above argument
there exists  
then $\B\in \Lf_{\omega}$ such that $\A=\Nr_n\B$. by $\Lf_{\omega}\subseteq \RCA_{\omega},$ we are done. 

It is known that class $\Nr_n\CA_{\omega}$ is not elementary. In fact, there is an algebra $\A\in \Nr_n\CA_{\omega}$ 
having a complete subalgebra $\B$, and $\B\notin \Nr_n\CA_{n+1},$ this will be proved below.

Now assume that $m$ is infinite. Here if $y$ is in the $n$ dimensional 
cylindric algebra then we cannot express $c_i=y$ for all $i\in\omega\sim n$, like we did
when $m$ is finite, so we have to think differently.

To show that it is pseudo-elementary, we use a three sorted defining theory, with one sort for a cylindric algebra of dimension $n$ 
$(c)$, the second sort for the Boolean reduct of a cylindric algebra $(b)$
and the third sort for a set of dimensions $(\delta)$. We use superscripts $n,b,\delta$ for variables 
and functions to indicate that the variable, or the returned value of the function, 
is of the sort of the cylindric algebra of dimension $n$, the Boolean part of the cylindric algebra or the dimension set, respectively.
The signature includes dimension sort constants $i^{\delta}$ for each $i<\omega$ to represent the dimensions.
The defining theory for $\Nr_n\CA_{\omega}$ incudes sentences demanding that the consatnts $i^{\delta}$ for $i<\omega$ 
are distinct and that the last two sorts define
a cylindric algenra of dimension $\omega$. For example the sentence
$$\forall x^{\delta}, y^{\delta}, z^{\delta}(d^b(x^{\delta}, y^{\delta})=c^b(z^{\delta}, d^b(x^{\delta}, z^{\delta}). d^{b}(z^{\delta}, y^{\delta})))$$
represents the cylindric algebra axiom ${\sf d}_{ij}={\sf c}_k({\sf d}_{ik}.{\sf d}_{kj})$ for all $i,j,k<\omega$.
We have have a function $I^b$ from sort $c$ to sort $b$ and sentences requiring that $I^b$ be injective and to respect the $n$ dimensional 
cylindric operations as follows: for all $x^r$
$$I^b({\sf d}_{ij})=d^b(i^{\delta}, j^{\delta})$$
$$I^b({\sf c}_i x^r)= {\sf c}_i^b(I^b(x)).$$
Finally we require that $I^b$ maps onto the set of $n$ dimensional elements
$$\forall y^b((\forall z^{\delta}(z^{\delta}\neq 0^{\delta},\ldots (n-1)^{\delta}\rightarrow c^b(z^{\delta}, y^b)=y^b))\leftrightarrow \exists x^r(y^b=I^b(x^r))).$$

In this case we need a fourth sort. We leave the details to the reader.

In all cases, it is clear that any algebra of the right type is the first sort of a model of this theory. 
Conversely, a model for this theory will consist of an $n$ dimensional cylindric algebra type (sort c), 
and a cylindric algebra whose dimension is the cardinality of 
the $\delta$-sorted elements, which is at least $|m|$. Thus the first sort of this model must be a neat reduct.

\item For ${\A}\in \CA_n,$ $\Rd_3{\A}$ denotes the $\CA_3$
obtained from $\A$ by discarding all operations indexed by indices in $n\sim 3.$
$\Df_n$ denotes the class of diagonal free cylindric algebras.
$\Rd_{df}{\A}$ denotes the $\Df_n$ obtained from $\A$
by deleting all diagonal elements. To prove the non-finite axiomatizability result we use Monk's algebras given above.

For $3\leq n,i<\omega$, with $n-1\leq i, {\C}_{n,i}$ denotes
the $\CA_n$ associated with the cylindric atom structure as defined on p. 95 of 
\cite{HMT1}.
Then by \cite[3.2.79]{HMT1}
for $3\leq n$, and $j<\omega$, 
$\Rd_3{\C}_{n,n+j}$ can be neatly embedded in a 
$\CA_{3+j+1}$.  (1)
By \cite[3.2.84]{HMT1}) we have for every $j\in \omega$, 
there is an $3\leq n$ such that $\Rd_{df}\Rd_{3}{\cal C}_{n,n+j}$
is a non-representable $\Df_3.$ (2)
Now suppose $m\in \omega$. By (2), 
choose $j\in \omega\sim 3$ so that $\Rd_{df}\Rd_3{\C}_{j,j+m+n-4}$
is a non-representable $\Df_3$. By (1) we have
$\Rd_{df}\Rd_3{\C}_{j,j+m+n-4}\subseteq \Nr_3{\B_m}$, for some 
${\B}\in \CA_{n+m}.$
Put ${\A}_m=\Nr_n\B_m$.  
$\Rd_{df}{\A}_m$ is not representable, 
a friotri, ${\A}_m\notin \RCA_{n},$ for else its $\Df$ reduct would be 
representable. Therefore $\A_m\notin EL\Nr_n\CA_{\omega}$.
Now let $\C_m$ be an algebra similar to $\CA_{\omega}$'s such that $\B_m=\Rd_{n+m}\C_m$.
Then $\A_m=\Nr_n\C_m$. Let $F$ be a non-principal ultrafilter on $\omega$. Then
$$\prod_{m\in \omega}\A_m/F=\prod_{m\in \omega}(\Nr_n\C_m)/F=\Nr_n(\prod_{m\in \omega}\C_m/F)$$
But $\prod_{m\in \omega}\C_m/F\in \CA_{\omega}$. Hence $\CA_n\sim El\Nr_n\CA_{\omega}$ is not closed under ultraproducts.
It follows that the latter class is not finitely axiomatizable.
In \cite{IGPL} it is proved that for $1<\alpha<\beta$, $El\Nr_{\alpha}\CA_{\beta}\subset S\Nr_{\alpha}\CA_{\beta}$.

\item This follows from the folowing known fact (a result of Hirsch and Sayed ahmed, submitted for publication) 
For $3\leq m<n<\omega$, 
there is $m$ dimensional  algebra $\C(m,n,r)$ such that
\begin{enumarab}
\item $\C(m,n,r)\in \Nr_m\CA_n$
\item $\C(m,n,r)\notin S\Nr_m\CA_{n+1}$
\item $\prod_{r\in \omega} \C(m, n,r)\in El\Nr_m\CA_n$
\end{enumarab}

\end{enumarab}
\end{demo}

From the above proof it follows that
\begin{corollary} Let $K$ be any class such that $\Nr_n\CA_{\omega}\subseteq K\subseteq \RCA_n$. Then $ELK$ is not finitely axiomatizable
\end{corollary}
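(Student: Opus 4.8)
The plan is to exhibit $ELK$ as a class trapped between $\Nr_n\CA_\omega$ and $\RCA_n$, and then to recycle the very sequence of algebras built in part (2) of the theorem to show that the \emph{complement} of $ELK$ fails to be closed under ultraproducts. Since $ELK$ is by construction an elementary class, and hence automatically closed under ultraproducts, this is exactly what is needed to defeat finite axiomatizability.

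First I would record the sandwich. The class $\RCA_n$ is a variety (Tarski), hence elementary; as it contains $K$ by hypothesis, it contains the smallest elementary class containing $K$, giving $ELK\subseteq \RCA_n$. On the other side, $\Nr_n\CA_\omega\subseteq K\subseteq ELK$. Thus
$$\Nr_n\CA_\omega\subseteq ELK\subseteq \RCA_n.$$

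Next I would invoke the algebras $\A_m=\Nr_n\C_m$ $(m\in\omega)$ constructed in part (2). Two features were established there. Each $\A_m$ has a non-representable $\Df$-reduct, so $\A_m\notin\RCA_n$, whence $\A_m\notin ELK$ by the upper end of the sandwich. And, for a non-principal ultrafilter $F$ on $\omega$,
$$\prod_{m\in\omega}\A_m/F=\Nr_n\Big(\prod_{m\in\omega}\C_m/F\Big)\in\Nr_n\CA_\omega\subseteq ELK,$$
because the ultraproduct $\prod_{m}\C_m/F$ is a genuine $\CA_\omega$ even though the individual factors $\C_m$ are only similar to one. Finally I would conclude by the standard characterization of finite axiomatizability: a class is finitely axiomatizable exactly when both it and its complement are closed under ultraproducts (and isomorphism). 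The previous step produces algebras $\A_m$ lying outside $ELK$ whose ultraproduct lies inside $ELK$, so the complement of $ELK$ is not closed under ultraproducts, and therefore $ELK$ cannot be finitely axiomatizable.

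All the substantive work already sits in part (2): the delicate points are that the factors $\C_m$ are merely similar to $\omega$-dimensional algebras while their ultraproduct is an honest $\CA_\omega$, together with the use of Monk's algebras to force $\A_m\notin\RCA_n$. Granting those facts, the corollary is a soft argument, the only genuinely new ingredient being the observation that $\RCA_n$ is elementary; this pins $ELK$ below $\RCA_n$ and makes the single sequence $(\A_m)_{m\in\omega}$ a witness against finite axiomatizability for \emph{every} intermediate class $K$ simultaneously.
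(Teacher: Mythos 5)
Your proposal is correct and follows essentially the same route as the paper: the corollary is stated there as an immediate consequence of part (2) of the theorem, namely that the Monk-type algebras $\A_m\notin\RCA_n\supseteq ELK$ while $\prod_{m\in\omega}\A_m/F\in\Nr_n\CA_\omega\subseteq ELK$, so the complement of the elementary class $ELK$ is not closed under ultraproducts. Your only additions are the explicit sandwich $\Nr_n\CA_\omega\subseteq ELK\subseteq\RCA_n$ (via elementarity of the variety $\RCA_n$) and the standard ultraproduct criterion for finite axiomatizability, both of which the paper leaves implicit.
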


\begin{theorem} For $\alpha$ infinite, and $k\in \omega$ there is $\A\in \Nr_{\alpha}\CA_{\alpha+k}\sim S\Nr_{\alpha}\CA_{\alpha+k+1}$
\end{theorem}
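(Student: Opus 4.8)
The plan is to reduce the statement to the construction of a single $(\alpha+k)$-dimensional dilation: it suffices to build $\E\in\CA_{\alpha+k}$ whose neat reduct $\A=\Nr_{\alpha}\E$ embeds into no $\Nr_{\alpha}\F$ with $\F\in\CA_{\alpha+k+1}$, since then $\A\in\Nr_{\alpha}\CA_{\alpha+k}$ holds automatically and $\A\notin S\Nr_{\alpha}\CA_{\alpha+k+1}$ by construction. Before doing anything I would record the obstacle that makes the infinite case genuinely different from the finite one treated in part~(3): for infinite $\alpha$ and finite $n<\alpha$, every finite neat reduct $\Nr_{n}\A=\Nr_{n}\E$ lies in $\bigcap_{n\leq p<\omega}S\Nr_{n}\CA_{p}=\RCA_{n}$ by the Neat Embedding Theorem, and so is representable. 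Hence the failure $\A\notin S\Nr_{\alpha}\CA_{\alpha+k+1}$ cannot be localised in any finite-dimensional reduct, and the naive ``pass to $\Nr_{n}$ and quote part~(3)'' transfer is simply unavailable; the witness has to be a genuinely $\alpha$-dimensional phenomenon.

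For the construction I would take as combinatorial seed the finite Hirsch--Sayed Ahmed algebras $\C(n,n+k,r)$ of part~(3), with $3\leq n$ and the gap fixed to $k$, so that $\C(n,n+k,r)\in\Nr_{n}\CA_{n+k}\setminus S\Nr_{n}\CA_{n+k+1}$. These arise as complex (or term) algebras of a cylindric atom structure built from a Monk--rainbow graph whose dimension of neat embeddability is controlled by a colouring/clique property of the graph, with $r$ tuning its girth. The uniformity of that construction lets me rerun it with dimension parameter $\alpha$ in place of $n$: I form the $\alpha$-dimensional atom structure $\mathcal S$ from the same graph and set $\A=\Cm\mathcal S$ (equivalently the $\Nr_{\alpha}$ of the complex algebra of the associated relation-algebra atom structure). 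The positive part $\A\in\Nr_{\alpha}\CA_{\alpha+k}$ I would then establish by exhibiting an $(\alpha+k)$-dimensional hyperbasis for $\mathcal S$, equivalently by showing that \pe\ has a \ws\ in the $(\alpha+k)$-node \ef\ game (the hyperbasis game); this invokes only the weaker colouring property that the underlying graph does satisfy, and it delivers the dilation $\E\in\CA_{\alpha+k}$ with $\A=\Nr_{\alpha}\E$.

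For the negative part I would assume toward a contradiction that $\A\subseteq\Nr_{\alpha}\F$ with $\F\in\CA_{\alpha+k+1}$ and, via the game characterisation of $S\Nr_{\alpha}\CA_{\alpha+k+1}$, translate this into a \ws\ for \pe\ in the $(\alpha+k+1)$-node game. The one extra dimension $\alpha+k$ supplies a single additional spare node, and I would confine the play to a finite sub-board involving the first $n$ dimensions together with $k+1$ spare nodes; a \ws\ there would manufacture an $(n+k+1)$-dimensional hyperbasis for the seed, i.e. would show $\C(n,n+k,r)\in S\Nr_{n}\CA_{n+k+1}$, contradicting part~(3). The main obstacle is precisely this last step: since the finite neat reducts of $\A$ are representable, \pa\ cannot win any finite-node game on them outright, so the contradiction must be extracted from the interaction of the one genuinely new dimension with the infinitely many old ones. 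Designing \pa's strategy so that it ``forgets'' all but finitely many dimensions while preserving the graph obstruction — and verifying that the induced finite game is exactly the one ruled out by the non-embeddability of $\C(n,n+k,r)$ — is the delicate heart of the argument; the residual bookkeeping, namely that the dilation satisfies the cylindric axioms and that the hypothetical embedding restricts correctly, is routine.
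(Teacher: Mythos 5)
Your proposal has two genuine gaps, and the second one is fatal as written. First, the positive half: you propose to ``rerun'' the Hirsch--Sayed Ahmed construction with dimension parameter $\alpha$ and to certify $\A\in\Nr_\alpha\CA_{\alpha+k}$ by a hyperbasis/game argument. But hyperbases and the associated games characterise the classes $S\Nr$ (more precisely $S_c\Nr$), not the exact neat-reduct classes: a \ws\ for \pe\ in the $(\alpha+k)$-node game would at best give an embedding of $\A$ into some $\Nr_\alpha\E$, i.e. $\A\in S\Nr_\alpha\CA_{\alpha+k}$, whereas the theorem demands $\A\in\Nr_\alpha\CA_{\alpha+k}$ on the nose; moreover that machinery is developed for finitely many nodes, and you give no indication of how to run it when the number of nodes is an infinite ordinal. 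The paper avoids both problems by never building an infinite-dimensional atom structure at all: it re-indexes the finite algebras $\C(|\Gamma|,|\Gamma|+k,r)$ along the finite subsets $\Gamma\subseteq\alpha$ and forms the ultraproduct $\B^r=\Pi_{\Gamma/F}\C^r_\Gamma$ over an ultrafilter containing every $M_\Gamma=\{\Delta: \Gamma\subseteq\Delta\}$; since each factor is \emph{exactly} a neat reduct, $\C(|\Gamma|,|\Gamma|+k,r)\cong\Nr_{|\Gamma|}\C(|\Gamma|+k,|\Gamma|+k,r)$, and neat reducts commute with this ultraproduct, one gets $\B^r\cong\Nr_\alpha\Pi_{\Gamma/F}\A_\Gamma$ exactly, with $\Pi_{\Gamma/F}\A_\Gamma\in\CA_{\alpha+k}$.

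Second, the negative half, which you yourself flag as ``the delicate heart'': you never supply the idea that makes it work, and the obstacle you correctly identify (every finite neat reduct of $\A$ lies in $\Nr_n\CA_\omega\subseteq\RCA_n$, so \pa\ cannot win any of the relevant finite games played on a neat reduct) is precisely why your plan of confining play to a finite sub-board cannot succeed in the form you describe. The paper's resolution is not a game argument but a \emph{relativisation}: the additional property of the finite algebras that for $m<n$ there is an element $x$ with $\C(m,m+k,r)\cong\Rl_{x}\Rd_m\C(n,n+k,r)$. Given a hypothetical $\B^r\subseteq\Nr_\alpha\C$ with $\C\in\CA_{\alpha+k+1}$, one takes the finite reduct $\Rd^\lambda$ along an injection $\lambda:m+k+1\to\alpha+k+1$ fixing $m$ pointwise and sending $m+i$ to $\alpha+i$, which yields $\Rd_m\B^r\subseteq\Nr_m\Rd^\lambda(\C)$, hence $\Rd_m\B^r\in S\Nr_m\CA_{m+k+1}$; then the ultraproduct of the isomorphisms above embeds $\C(m,m+k,r)$ into $\Rl_x\Rd_m\B^r$, and since relativisation preserves membership in $S\Nr_m\CA_{m+k+1}$, this forces $\C(m,m+k,r)\in S\Nr_m\CA_{m+k+1}$, contradicting the finite-dimensional result. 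The point is that the finite seed sits inside $\B^r$ as a \emph{relativised} reduct, not as a neat reduct, which is exactly how the representability of the finite neat reducts is sidestepped; without this step (or some substitute for it) your outline stops exactly where the proof has to begin.
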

\begin{proof} Let $\C(m,n,r)$ be as consrtucted above, then we also have:
For $m<n$ and $k\geq 1$, there exists $x_n\in \C(n,n+k,r)$ such that $\C(m,m+k,r)\cong \Rl_{x}\C(n, n+k, r).$
Let $\alpha$ be an infinite ordinal, 
let $X$ be any finite subset of $\alpha$, let $I=\set{\Gamma:X\subseteq\Gamma\subseteq\alpha,\; |\Gamma|<\omega}$.  
For each $\Gamma\in I$ let $M_\Gamma=\set{\Delta\in I:\Delta\supseteq\Gamma}$ and let $F$ be any ultrafilter over $I$ 
such that for all $\Gamma\in I$ we have $M_\Gamma\in F$ 
(such an ultrafilter exists because $M_{\Gamma_1}\cap M_{\Gamma_2} = M_{\Gamma_1\cup\Gamma_2}$).  
For each $\Gamma\in I$ let $\rho_\Gamma$ be a bijection from $|\Gamma|$ onto $\Gamma$.   
For each $\Gamma\in I$ let $\c A_\Gamma, \c B_\Gamma$ be $\CA_\alpha$-type algebras.  
For each $\Gamma\in I$ we have 
$\Rd^{\rho_\Gamma}\c A_\Gamma=\Rd^{\rho_\Gamma}\c B_\Gamma$ then $\Pi_{\Gamma/F}\c A_\Gamma=\Pi_{\Gamma/F}\c B_\Gamma$.  
%Standard proof, by \Los' theorem.  

%Note that the base of $\Pi_{\Gamma/F}\c A_\Gamma$ is 
%identical with the base of $\Pi_{\Gamma/F}\Rd^{\rho_\Gamma}\c A_\rho$ 
%which is identical with the base of $\Pi_{\Gamma/F}\c B_\Gamma$, by the assumption in the lemma.  
%Each operator $o$ of $\CA_\alpha$ is the same for both ultraproducts because $\set{\Gamma\in I:\dim(o)\subseteq\rng(\rho_\Gamma)} \in F$.  

Furthermore, if $\Rd^{\rho_\Gamma}\c A_\Gamma \in \CA_{|\Gamma|}$, for each $\Gamma\in I$  then $\Pi_{\Gamma/F}\c A_\Gamma\in \CA_\alpha$.
%For this, it suffices to prove that each of the defining axioms for $\CA_\alpha$ holds for $\Pi_{\Gamma/F}\c A_\Gamma$.  
%Let $\sigma=\tau$ be one of the defining equations for $\CA_{\alpha}$, the number of dimension variables is finite, say $n$. 
%Take any $i_0, i_1,\ldots  i_{n-1}\in\alpha$, we must prove that 
%$\Pi_{\Gamma/F}\c A_\Gamma\models \sigma(i_0,\ldots i_{n-1})=\tau(i_0\ldots  i_{n-1})$.  
%If they are all in $\rng(\rho_\Gamma)$, say $i_0=\rho_\Gamma(j_0), \; i_1=\rho_\Gamma(j_1), \;\ldots i_{n-1}=\rho_\Gamma(j_{n-1})$,  
%then $\Rd^{\rho_\Gamma}\c A_\Gamma\models \sigma(j_0, \ldots ,j_{n-1})=\tau(j_0, \ldots j_{n-1})$, 
%since $\Rd^{\rho_\Gamma}\c A_\Gamma\in\CA_{|\Gamma|}$, so $\c A_\Gamma\models\sigma(i_0\ldots , i_{n-1})=\tau(i_0\ldots i_{n-1}$.  
%Hence $\set{\Gamma\in I:\c A_\Gamma\models\sigma(i_0, \ldots, i_{n-1}l)=\tau(i_0, \ldots,  i_{n-1})}\supseteq\set{\Gamma\in I:i_0,\ldots,  i_{n-1}
%\in\rng(\rho_\Gamma}\in F$, 
%hence $\Pi_{\Gamma/F}\c A_\Gamma\models\sigma(i_0,\ldots  i_{n-1})=\tau(i_0, \ldots,  i_{n-1})$.  
%Thus $\Pi_{\Gamma/F}\c A_\Gamma\in\CA_\alpha$.

Let $k\in \omega$. Let $\alpha$ be an infinite ordinal. 
Then $S\Nr_{\alpha}\CA_{\alpha+k+1}\subset S\Nr_{\alpha}\CA_{\alpha+k}.$
Let $r\in \omega$. 
Let $I=\{\Gamma: \Gamma\subseteq \alpha,  |\Gamma|<\omega\}$. 
For each $\Gamma\in I$, let $M_{\Gamma}=\{\Delta\in I: \Gamma\subseteq \Delta\}$, 
and let $F$ be an ultrafilter on $I$ such that $\forall\Gamma\in I,\; M_{\Gamma}\in F$. 
For each $\Gamma\in I$, let $\rho_{\Gamma}$ 
be a one to one function from $|\Gamma|$ onto $\Gamma.$
%(Note that if $\alpha=\omega$, then the $\rho_{\Gamma}$'s can be the identity maps). 
Let ${\c C}_{\Gamma}^r$ be an algebra similar to $\CA_{\alpha}$ such that 
\[\Rd^{\rho_\Gamma}{\c C}_{\Gamma}^r={\c C}(|\Gamma|, |\Gamma|+k,r).\]
Let  
\[\B^r=\prod_{\Gamma/F\in I}\c C_{\Gamma}^r.\]
We will prove that 
\begin{enumerate}
\item\label{en:1} $\B^r\in \Nr_\alpha\CA_{\alpha+k}$ and 
\item\label{en:2} $\B^r\not\in S\Nr_\alpha\CA_{\alpha+k+1}$.  \end{enumerate}

The theorem will follow, since $\Rd_\CA\B^r\in S\Nr_\alpha \CA_{\alpha+k} \setminus S\Nr_\alpha\CA_{\alpha+k+1}$.

For the first part, for each $\Gamma\in I$ we know that $\c C(|\Gamma|+k, |\Gamma|+k, r) \in\CA_{|\Gamma|+k}$ and 
$\Nr_{|\Gamma|}\c C(|\Gamma|+k, |\Gamma|+k, r)\cong\c C(|\Gamma|, |\Gamma|+k, r)$.
Let $\sigma_{\Gamma}$ be a one to one function 
 $(|\Gamma|+k)\rightarrow(\alpha+k)$ such that $\rho_{\Gamma}\subseteq \sigma_{\Gamma}$
and $\sigma_{\Gamma}(|\Gamma|+i)=\alpha+i$ for every $i<k$. Let $\c A_{\Gamma}$ be an algebra similar to a 
$\CA_{\alpha+k}$ such that 
$\Rd^{\sigma_\Gamma}\c A_{\Gamma}=\c C(|\Gamma|+k, |\Gamma|+k, r)$.  By the second part   
with  $\alpha+k$ in place of $\alpha$,\/ $m\cup \set{\alpha+i:i<k}$ 
in place of $X$,\/ $\set{\Gamma\subseteq \alpha+k: |\Gamma|<\omega,\;  X\subseteq\Gamma}$ 
in place of $I$, and with $\sigma_\Gamma$ in place of $\rho_\Gamma$, we know that  $\Pi_{\Gamma/F}\A_{\Gamma}\in \CA_{\alpha+k}$.

We prove that $\B^r\subseteq \Nr_\alpha\Pi_{\Gamma/F}\c A_\Gamma$.  Recall that $\B^r=\Pi_{\Gamma/F}\c C^r_\Gamma$ and note 
that $C^r_{\Gamma}\subseteq A_{\Gamma}$ 
(the base of $C^r_\Gamma$ is $\c C(|\Gamma|, |\Gamma|+k, r)$, the base of $A_\Gamma$ is $\c C(|\Gamma|+k, |\Gamma|+k, r)$).
 So, for each $\Gamma\in I$,
\begin{align*}
\Rd^{\rho_{\Gamma}}\C_{\Gamma}^r&=\c C((|\Gamma|, |\Gamma|+k, r)\\
&\cong\Nr_{|\Gamma|}\c C(|\Gamma|+k, |\Gamma|+k, r)\\
&=\Nr_{|\Gamma|}\Rd^{\sigma_{\Gamma}}\A_{\Gamma}\\
&=\Rd^{\sigma_\Gamma}\Nr_\Gamma\A_\Gamma\\
&=\Rd^{\rho_\Gamma}\Nr_\Gamma\A_\Gamma
\end{align*}
By the first part of the first part we deduce that 
$\Pi_{\Gamma/F}\C^r_\Gamma\cong\Pi_{\Gamma/F}\Nr_\Gamma\A_\Gamma=\Nr_\alpha\Pi_{\Gamma/F}\A_\Gamma$,
proving \eqref{en:1}.

Now we prove \eqref{en:2}.
For this assume, seeking a contradiction, that $\B^r\in S\Nr_{\alpha}\CA_{\alpha+k+1}$, 
$\B^r\subseteq \Nr_{\alpha}\c C$, where  $\c C\in \CA_{\alpha+k+1}$.  
Let $3\leq m<\omega$ and  $\lambda:m+k+1\rightarrow \alpha +k+1$ be the function defined by $\lambda(i)=i$ for $i<m$ 
and $\lambda(m+i)=\alpha+i$ for $i<k+1$.
Then $\Rd^\lambda(\c C)\in \CA_{m+k+1}$ and $\Rd_m\B^r\subseteq \Nr_m\Rd^\lambda(\c C)$.
%For $m<n$, let $$x_n=\{f\in F(n,n+k,r): m\leq j<n\to \exists i<m f(i,j)=Id\}.$$ 
%Then $x_n\in \c C(n,n+k,r)$ and ${\sf c}_ix_n\cdot {\sf c}_jx_n=x_n$ for distinct $i, j<m$.
%Futhermore 
%\[{I_n:\c C}(m,m+k,r)\cong \Rl_{x_n}\Rd_m {\c C}(n,n+k, r).\]
%via
%\[ I_n(S)=\{f\in F(n, n+k, r): f\upharpoonright m\times m\in S, \forall j(m\leq j<n\to  \exists i<m\; f(i,j)=Id)\}.\]
For each $\Gamma\in I$,\/  let $I_{|\Gamma|}$ be an isomorphism 
\[{\c C}(m,m+k,r)\cong \Rl_{x_{|\Gamma|}}\Rd_m {\c C}(|\Gamma|, |\Gamma+k|,r).\]
Let $x=(x_{|\Gamma|}:\Gamma)/F$ and let $\iota( b)=(I_{|\Gamma|}b: \Gamma)/F$ for  $b\in \c C(m,m+k,r)$. 
Then $\iota$ is an isomorphism from $\c C(m, m+k,r)$ into $\Rl_x\Rd_m\B^r$. 
%Now $\Rd_{m}\Rd_{\Sc}\B^r\in S\Nr_{m}\Sc_{m+k+1}$, and ${\sf c}_ix\cdot {\sf c}_jx=x\leq {\sf s}_{[j/i]}x,$
%so that ${\sf s}_{[j/i]}x\cdot {\sf s}_{[i/j]}x=x$ for any distinct $i,j<m$. 
Then $\Rl_x\Rd_{m}\B^r\in S\Nr_m\CA_{m+k+1}$. 
It follows that  $\c C (m,m+k,r)\in S\Nr_{m}\CA_{m+k+1}$ which is a contradiction and we are done.
%\end{enumarab}
\end{proof}


\begin{thebibliography}{}


\bibitem{HMT1} L. Henkin, J.D. Monk and  A.Tarski, {\it Cylindric Algebras Part I}.
North Holland, 1971.
\bibitem{HMT2} L. Henkin, J.D. Monk and  A.Tarski, {\it Cylindric Algebras Part II}.
North Holland, 1985.


\bibitem{MLQ} T. Sayed Ahmed {\it A model-theoretic solution to a problem of Tarski} Math Logic Quarterly (48)(2002) 343-355

%\end{document}
\end{thebibliography}
\end{document}